\theoremstyle{plain}
\newtheorem{mainthm}{Theorem}
\newtheorem{maincor}{Corollary}
\newtheorem{thm}{Theorem}[section]
\newtheorem{lem}[thm]{Lemma}
\theoremstyle{definition}
\newcommand{\eqdef}{\stackrel{\scriptscriptstyle\rm def}{=}}
\begin{document}

\title[
A formula for the local metric pressure]{A formula for the local metric pressure\\
}

\author[M. Carvalho]{Maria Carvalho}
\address{Maria Carvalho\\ Centro de Matem\'{a}tica da Universidade do Porto\\ Rua do
Campo Alegre 687\\ 4169-007 Porto\\ Portugal}
\email{mpcarval@fc.up.pt}

\author[S. P\'erez]{Sebasti\'an A. P\'erez}
\address{Sebasti\'an A. P\'erez\\ Centro de Matem\'{a}tica da Universidade do Porto\\ Rua do
Campo Alegre 687\\ 4169-007 Porto\\ Portugal}
\email{sebastian.opazo@fc.up.pt}

\keywords{Gibbs measure; Pressure; Equilibrium state.}
\subjclass[2010]{28D05, 28D20, 37D35}

\maketitle


\setcounter{tocdepth}{2}

\begin{abstract}
In this note we present a formula for the local metric pressure that generalizes Brin-Katok result for the metric entropy. As an application, we give a straightforward proof of the fact that non-atomic weak-Gibbs invariant probability measures are equilibrium states. 
\end{abstract}

\section{Introduction}

Let  $(X,d)$ be a compact metric space, $f\colon X \to X$ a continuous transformation and $\varphi \colon X \to \mathbb{R}$ a continuous potential. The \emph{topological pressure} $P_{\text{top}}(f,\varphi)$ of $f$ and $\varphi$ is a topological invariant that generalizes the notion of topological entropy of $f$, one denotes by $h_{\text{top}}(f)$, in the sense that $P_{\text{top}}(f,\varphi) = h_{\text{top}}(f)$ whenever $\varphi\equiv 0$. We refer the reader to \cite{W1981} for precise definitions and properties of these notions. A Borel $f$-invariant probability measure $\mu$ is said to be an \textit{equilibrium state} for $f$ and the potential $\varphi$ if
$$P_{\mu}(f,\varphi) = \sup_{\{\nu \colon \, f_*(\nu)\,=\,\nu\}} \,\big\{\,P_{\nu}(f,\varphi)\,\big\}$$
where the $P_{\nu}(f,\varphi)$ stands for the sum $h_{\nu}(f) + \int \varphi \,d\nu$ and the supremum is taken over all the Borel $f$-invariant probability measures. 
According to the Variational Principle (\cite[Theorem 9.10]{W1981}), the previous least upper bound coincides with the supremum evaluated on the set of ergodic probability measures, and is equal to $P_{\text{top}}(f,\varphi)$. 
We will show how to estimate the metric pressure of any continuous potential, thereby generalizing Brin-Katok formula for the metric entropy \cite{BK1983}. 


\begin{mainthm}\label{thm.BK} 
Let $\mu$ be a Borel non-atomic $f$-invariant probability measure. Then there exists a $\mu$-integrable map $\,x \, \in \, X  \, \, \mapsto \, \,P_{\mu}(x,f,\varphi)$ which is $f$-invariant and satisfies
$$\int P_{\mu}(x,f, \varphi)\,d\mu = P_{\mu}(f, \varphi).$$
If, in addition, $\mu$ is ergodic, then $P_{\mu}(x,f, \varphi) = P_{\mu}(f, \varphi)$ for $\mu$ almost every $x \in X$.
\end{mainthm}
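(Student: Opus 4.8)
The plan is to reduce the statement to the combination of two classical results: the Brin--Katok theorem for the local metric entropy and the Birkhoff ergodic theorem for the Birkhoff averages of the potential. Writing $S_n\varphi(x) = \sum_{i=0}^{n-1}\varphi(f^i x)$ for the Birkhoff sum and $B_n(x,\varepsilon)$ for the dynamical (Bowen) ball of radius $\varepsilon$ and order $n$, I would work with the local quantity
$$P_{\mu}(x,f,\varphi) = \lim_{\varepsilon \to 0}\,\limsup_{n\to\infty}\left(-\frac{1}{n}\log \mu\big(B_n(x,\varepsilon)\big) + \frac{1}{n}\,S_n\varphi(x)\right),$$
and the core of the argument is to show that the $\limsup$ over $n$ and the limit over $\varepsilon$ may be taken as genuine limits $\mu$-almost everywhere, with the resulting object decomposing additively.

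First I would invoke the Birkhoff ergodic theorem. Since $\varphi$ is continuous on the compact space $X$ it is bounded, so $\tfrac{1}{n}S_n\varphi$ converges $\mu$-almost everywhere to an $f$-invariant integrable function $\overline{\varphi}$ with $\int \overline{\varphi}\,d\mu = \int \varphi\,d\mu$. Because this second term has a finite limit, the elementary identity $\limsup_n(a_n+c_n)=\limsup_n a_n + c$ valid whenever $c_n\to c$ yields, for $\mu$-almost every $x$,
$$\limsup_{n\to\infty}\Big(-\tfrac{1}{n}\log \mu(B_n(x,\varepsilon)) + \tfrac{1}{n}S_n\varphi(x)\Big) = \limsup_{n\to\infty}\Big(-\tfrac{1}{n}\log\mu(B_n(x,\varepsilon))\Big) + \overline{\varphi}(x),$$
and the same identity holds with $\liminf$ in place of $\limsup$. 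This decouples the potential from the measure of the dynamical balls.

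Next I would apply the Brin--Katok theorem \cite{BK1983} in its general (not necessarily ergodic) form: the $\varepsilon\to 0$ limits of $\limsup_n$ and of $\liminf_n$ of $-\tfrac{1}{n}\log\mu(B_n(x,\varepsilon))$ coincide $\mu$-almost everywhere, defining an $f$-invariant integrable local entropy $h_{\mu}(x)$ with $\int h_{\mu}\,d\mu = h_{\mu}(f)$. Combining this with the previous step shows that $P_{\mu}(x,f,\varphi)$ is a genuine limit, equal to $h_{\mu}(x)+\overline{\varphi}(x)$, hence $f$-invariant and integrable as a sum of such functions, and
$$\int P_{\mu}(x,f,\varphi)\,d\mu = \int h_{\mu}\,d\mu + \int \overline{\varphi}\,d\mu = h_{\mu}(f) + \int\varphi\,d\mu = P_{\mu}(f,\varphi).$$

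Finally, if $\mu$ is ergodic then Birkhoff forces $\overline{\varphi}(x)=\int\varphi\,d\mu$ and the ergodic Brin--Katok theorem forces $h_{\mu}(x)=h_{\mu}(f)$, both $\mu$-almost everywhere, so $P_{\mu}(x,f,\varphi)=P_{\mu}(f,\varphi)$ for $\mu$-almost every $x$. The one point that genuinely requires care is upgrading the $\limsup$ to a true limit: this rests entirely on the equality of the $\liminf$ and $\limsup$ local entropies supplied by Brin--Katok, together with the fact that the Birkhoff term possesses an honest limit and therefore contributes additively to both the $\limsup$ and the $\liminf$ rather than merely to one of them. Everything else is a mechanical assembly of the two cited limit theorems.
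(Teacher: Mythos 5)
Your proposal is correct and follows essentially the same route as the paper: both decompose the local pressure as $P_{\mu}(x,f,\varphi)=h_{\mu}(x,f)+\widetilde{\varphi}(x)$ by combining the Birkhoff ergodic theorem (which makes the potential term a genuine limit, hence contributing additively to both $\liminf$ and $\limsup$) with the non-ergodic Brin--Katok theorem, and then integrate. The paper's Lemma~\ref{le.BK} carries out exactly this argument, including your key observation that equality of the $\liminf$ and $\limsup$ expressions at fixed $\varepsilon$ persists after letting $\varepsilon \to 0$.
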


We remark that, when $\varphi \equiv 0$, the map  $\,x \, \in \, X  \, \, \mapsto \, \,P_{\mu}(x,f,\varphi)$ is the  local entropy as defined by Brin and Katok. 


A Borel probability measure $\mu$ is said to be \textit{weak-Gibbs} for the dynamical system $f$ with respect to a potential $\varphi$ if there exists $\varepsilon_0 > 0$ and a subset $\Lambda \subset X$ with full $\mu$-measure such that, for every $0 < \varepsilon < \varepsilon_0$ and every $x \in \Lambda$, there is a sequence of positive constants $\big(\delta_n(\varepsilon, x)\big)_{n \, \in \, \mathbb{N}}$ satisfying
\begin{equation}\label{eq:Gibbs-delta}
\lim_{n \, \to\, + \infty}\,\frac{\log \delta_n(\varepsilon, x)}{n}=0
\end{equation}
and, for every $n \in \mathbb{N}$,
\begin{equation}\label{eq:Gibbs}
\delta_n(\varepsilon, x)^{-1} \leqslant \frac{\mu \big(B^f_n(x,\varepsilon)\big)}
{\exp\big(-P_{\text{top}}(\varphi,f)\,n + S^f_n\varphi(x)\big)}
 \leqslant \delta_n(\varepsilon, x)
\end{equation}
where 
$$B^f_n(x,\varepsilon) = \Big\{y \in X \colon \, d(f^i(x), f^i(y)) < \varepsilon, \quad \forall \, 0 \leqslant i \leqslant n-1\Big\}$$
is the $n$th dynamical ball of $f$ at $x$ with radius $\varepsilon$ 
and $S^f_n\varphi(x)$ stands for the $n$th Birkhoff's sum $\sum_{i=0}^{n-1}\,\varphi(f^i(x))$ at $x$ associated to the dynamics $f$ and the fixed potential $\varphi$.
We say that a weak-Gibbs measure $\mu$ for $f$ with respect to $\varphi$ is \emph{Gibbs} if the sequence $\big(\delta_n(\varepsilon, x)\big)_{n \, \in \, \mathbb{N}}$ is independent of $n$ and $x$.

\begin{maincor}\label{cor.ES} 
Let $f\colon X \to X$ be a continuous map on a compact metric space $(X,d)$ whose topological entropy $h_{\text{top}}(f)$ is finite and which preserves a Borel non-atomic probability measure $\mu$. Consider a continuous potential $\varphi\colon X \to \mathbb{R}$. If $\mu$ is a weak-Gibbs measure for $f$ with respect to $\varphi$, then $\mu$ is an equilibrium state for $f$ and $\varphi$.
\end{maincor}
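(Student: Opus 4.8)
The plan is to use Theorem~\ref{thm.BK} to reduce the statement to a pointwise computation of the local metric pressure, and then to invoke the Variational Principle. Since $\mu$ is non-atomic, Theorem~\ref{thm.BK} supplies an $f$-invariant, $\mu$-integrable function $x \mapsto P_{\mu}(x, f, \varphi)$ whose integral equals $P_{\mu}(f, \varphi)$. Thus it suffices to show that the weak-Gibbs hypothesis forces this local pressure to be $\mu$-almost everywhere equal to the topological pressure $P_{\text{top}}(f, \varphi)$; integrating then yields $P_{\mu}(f, \varphi) = P_{\text{top}}(f, \varphi)$, and the Variational Principle \cite[Theorem 9.10]{W1981} immediately gives that $\mu$ realizes the supremum, that is, $\mu$ is an equilibrium state.

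For the pointwise step, I would fix $x \in \Lambda$ and $0 < \varepsilon < \varepsilon_0$ and take logarithms in the weak-Gibbs inequality \eqref{eq:Gibbs}. Writing the middle term out, this reads
$$-\log \delta_n(\varepsilon, x) \,\leqslant\, \log \mu\big(B^f_n(x, \varepsilon)\big) + P_{\text{top}}(f, \varphi)\,n - S^f_n\varphi(x) \,\leqslant\, \log \delta_n(\varepsilon, x),$$
so that, after rearranging,
$$\frac{1}{n}\Big(S^f_n\varphi(x) - \log \mu\big(B^f_n(x, \varepsilon)\big)\Big) \,=\, P_{\text{top}}(f, \varphi) \,-\, \frac{\theta_n}{n}, \qquad |\theta_n| \leqslant \log \delta_n(\varepsilon, x).$$
The subexponential control \eqref{eq:Gibbs-delta} gives $\tfrac{1}{n}\log \delta_n(\varepsilon, x) \to 0$, hence the genuine limit in $n$ exists and equals $P_{\text{top}}(f, \varphi)$, with a value independent of the choice of $\varepsilon \in (0, \varepsilon_0)$. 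Consequently every reasonable limiting prescription for the local pressure---the $\limsup$ and the $\liminf$ in $n$, followed by the limit as $\varepsilon \to 0$---collapses to the same quantity, so $P_{\mu}(x, f, \varphi) = P_{\text{top}}(f, \varphi)$ for every $x \in \Lambda$, and in particular for $\mu$-almost every $x$.

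Integrating this identity against $\mu$ and applying Theorem~\ref{thm.BK} yields $P_{\mu}(f, \varphi) = \int P_{\mu}(x, f, \varphi)\,d\mu = P_{\text{top}}(f, \varphi)$. The finiteness of $h_{\text{top}}(f)$, together with the boundedness of the continuous potential $\varphi$ on the compact space $X$, guarantees that $P_{\text{top}}(f, \varphi)$---and hence $P_{\mu}(f, \varphi) = h_{\mu}(f) + \int \varphi \, d\mu$---is finite, so these manipulations are legitimate. Since by the Variational Principle the supremum of $P_{\nu}(f, \varphi)$ over all $f$-invariant Borel probability measures $\nu$ equals $P_{\text{top}}(f, \varphi)$, and $\mu$ attains this value, we conclude that $\mu$ is an equilibrium state for $f$ and $\varphi$.

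I do not expect a substantial obstacle here, since the statement is essentially a corollary of Theorem~\ref{thm.BK}: the heavy lifting resides in establishing that theorem. The only point demanding care is the identification of the local pressure on $\Lambda$: one must ensure that the quantity pinned down by \eqref{eq:Gibbs}---namely $\tfrac{1}{n}(S^f_n\varphi(x) - \log \mu(B^f_n(x, \varepsilon)))$---is precisely the one entering the definition of $P_{\mu}(x, f, \varphi)$ used in the proof of Theorem~\ref{thm.BK}, and that the double limit behaves as expected. The subexponential bound on $\delta_n(\varepsilon, x)$ removes this difficulty by forcing the inner limit in $n$ to exist outright, rendering the value independent of $\varepsilon$ and insensitive to whether one takes $\limsup$ or $\liminf$.
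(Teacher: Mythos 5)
Your proposal is correct and follows essentially the same route as the paper: take logarithms in the weak-Gibbs inequality \eqref{eq:Gibbs}, use the subexponential control \eqref{eq:Gibbs-delta} on $\delta_n(\varepsilon,x)$ to identify $P_{\mu}(x,f,\varphi)=P_{\text{top}}(f,\varphi)$ on a full-measure set, integrate via Theorem~\ref{thm.BK}, and conclude with the Variational Principle. Your extra observations---that the genuine limit in $n$ exists on $\Lambda$ (making the $\limsup$/$\liminf$ distinction moot) and that finiteness of $P_{\text{top}}(f,\varphi)$ follows from $h_{\text{top}}(f)<\infty$ and the boundedness of $\varphi$---are sound refinements of the same argument, not a different one.
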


\section{Proof of Theorem~\ref{thm.BK}}

In this section we will extend Brin-Katok local entropy formula to general continuous potentials
(another generalization may by found in~\cite{ZZC2014}).
 Brin-Katok's result asserts that, given a compact metric space $X$, a continuous map $f:X \to X$ and Borel non-atomic $f$-invariant probability measure $\mu$, there exists a full $\mu$-measure set $\mathcal{BK}\subset X$ such that:
\begin{itemize}
\item [(a)] For every $x \in \mathcal{BK}$,
$$h_{\mu}(x,f)\eqdef \lim_{\varepsilon\to 0} \limsup_{n\to+\infty} \frac{-\log \mu\big(B^f_n(x,\varepsilon)\big)
}{n}=\lim_{\varepsilon\to 0} \liminf_{n\to+\infty} \frac{-\log \mu\big(B^f_n(x,\varepsilon)\big)
}{n}
$$
is well defined.
\medskip
\item [(b)] The map $\,x \in \mathcal{BK} \,\mapsto \,h_{\mu}(x,f)\,\,$ is $f$-invariant.
\medskip
\item [(c)] $\int h_{\mu}(x,f) \,d\mu = h_{\mu}(f)$.
\end{itemize}

\medskip

Having fixed a continuous potential $\varphi$ whose pressure $P_{\text{top}}(f, \varphi)$ finite, consider the Birkhoff's sums $S^f_n\varphi$ and, for $x \in X$, define \emph{the local pressure of $\varphi$ at $x$} by
\begin{eqnarray*}
P_{\mu}(x,f,\varphi)&=&\lim_{\varepsilon\,\to\, 0} \,\limsup_{n\,\to\,+\infty} \,\frac{-\log \mu \big(B^f_n(x,\varepsilon)\big)+ S^f_n\varphi(x)}{n} 
=\lim_{\varepsilon\,\to\, 0} \,\liminf_{n\,\to\,+\infty}\, \frac{-\log \mu \big(B^f_n(x,\varepsilon)\big)+ S^f_n\varphi(x)}{n}
\end{eqnarray*}
if these limits exist and are equal.

\begin{lem}\label{le.BK} The following properties are valid for $\mu$: 
\begin{itemize}
\item [(a)] $P_{\mu}(x,f,\varphi)$ is well defined at $\mu$ almost every $x \in X$.
\medskip
\item [(b)] The map $\,x \, \mapsto \, P_{\mu}(x,f,\varphi)$ is $f$-invariant.
\medskip
\item [(c)] $\int P_{\mu}(x,f, \varphi)\,d\mu = h_{\mu}(f) + \int \varphi \,d\mu$.
\medskip
\item [(d)] If, in addition, $\mu$ is ergodic, then $P_{\mu}(x,f, \varphi) = h_{\mu}(f) + \int \varphi \,d\mu\,\,$ at $\mu$ almost every $x \in X$.
\end{itemize}
\end{lem}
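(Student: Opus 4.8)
The plan is to reduce everything to the Brin-Katok theorem quoted as (a)--(c) above together with Birkhoff's ergodic theorem, by exploiting the \emph{additive} structure of the defining expression. The starting observation is that
$$\frac{-\log\mu\big(B^f_n(x,\varepsilon)\big) + S^f_n\varphi(x)}{n} = \frac{-\log\mu\big(B^f_n(x,\varepsilon)\big)}{n} + \frac{S^f_n\varphi(x)}{n},$$
and the second summand does not depend on $\varepsilon$ at all, so the only role of $\varepsilon$ is confined to the entropy term that Brin-Katok already controls.

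First I would dispose of the Birkhoff term. Since $X$ is compact and $\varphi$ continuous, $\varphi \in L^1(\mu)$, so Birkhoff's ergodic theorem provides an $f$-invariant function $\varphi^*(x) \eqdef \lim_{n\to+\infty} \tfrac{1}{n}\, S^f_n\varphi(x)$, defined for $\mu$-a.e.\ $x$, with $\int \varphi^*\, d\mu = \int \varphi\, d\mu$. The crucial point is that this is a genuine limit in $n$, not merely a $\limsup$ or a $\liminf$.

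The key step is then an elementary fact about $\limsup$ and $\liminf$: if $b_n \to b$, then $\limsup_n (a_n + b_n) = (\limsup_n a_n) + b$, and likewise for $\liminf$. Applying this with $a_n = \tfrac{1}{n}\big(-\log\mu(B^f_n(x,\varepsilon))\big)$ and $b_n = \tfrac{1}{n}\, S^f_n\varphi(x) \to \varphi^*(x)$, I would obtain, for each fixed $\varepsilon$ and $\mu$-a.e.\ $x$,
$$\limsup_{n\to+\infty} \frac{-\log\mu(B^f_n(x,\varepsilon)) + S^f_n\varphi(x)}{n} = \Big(\limsup_{n\to+\infty}\frac{-\log\mu(B^f_n(x,\varepsilon))}{n}\Big) + \varphi^*(x),$$
and the same identity with $\liminf$. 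Letting $\varepsilon \to 0$ and invoking Brin-Katok (a), both sides converge and the $\limsup$ and $\liminf$ versions agree, yielding $P_{\mu}(x,f,\varphi) = h_{\mu}(x,f) + \varphi^*(x)$ for $\mu$-a.e.\ $x$. This single identity gives all four conclusions at once: part (a) is the well-definedness just established; part (b) follows since both $h_{\mu}(\cdot,f)$ and $\varphi^*$ are $f$-invariant (by Brin-Katok (b) and the invariance of Birkhoff averages); integrating and combining Brin-Katok (c) with $\int\varphi^*\,d\mu = \int\varphi\,d\mu$ gives part (c); and in the ergodic case the invariant functions $h_{\mu}(\cdot,f)$ and $\varphi^*$ are $\mu$-a.e.\ constant, with constant values $h_{\mu}(f)$ and $\int\varphi\,d\mu$ respectively, which is part (d).

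I do not expect a serious obstacle here: the argument is essentially bookkeeping once the decomposition is in place. The only point requiring a little care is the interchange of the limit in $\varepsilon$ with the additive splitting, but since $\varphi^*(x)$ is independent of $\varepsilon$ this interchange is immediate, and the whole proof rests on the fact that the potential contributes a Birkhoff average converging genuinely in $n$.
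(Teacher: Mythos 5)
Your proposal is correct and follows essentially the same route as the paper: both decompose the quotient into the Brin--Katok entropy term plus the Birkhoff average, use that the latter converges genuinely so the $\limsup$/$\liminf$ split additively, and derive the identity $P_{\mu}(x,f,\varphi)=h_{\mu}(x,f)+\widetilde{\varphi}(x)$, from which (b)--(d) follow immediately.
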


\begin{proof} Birkhoff's Theorem provides a full $\mu$-measure set $\mathcal{B}_\varphi$ and an $f$-invariant map
$$x \in \mathcal{B}_\varphi \quad \mapsto \quad \widetilde{\varphi}(x) = \lim_{n\,\to\, +\infty} \,\frac1{n}\,\sum^{n-1}_{i=0}\,\varphi(f^i(x))$$
satisfying $\int \widetilde{\varphi} \,d\mu =\int \varphi \,d\mu$.
Therefore, for every $x \in \mathcal{BK} \cap \mathcal{B}_\varphi$, we have
\[\begin{split}
&\liminf_{n\,\to\,+\infty} \,\frac{-\log \mu\big(B^f_n(x,\varepsilon)\big)}{n} + \widetilde{\varphi}(x)
= \liminf_{n\,\to\,+\infty}\, \frac{-\log \mu\big(B^f_n(x,\varepsilon)\big) + S^f_n\varphi(x)}{n}\leqslant\\
&\leqslant \limsup_{n\,\to\,+\infty} \,\frac{-\log \mu\big(B^f_n(x,\varepsilon)\big) + S^f_n\varphi(x)}{n} = \limsup_{n\,\to\,+\infty} \,\frac{-\log \mu\big(B^f_n(x,\varepsilon)\big)}{n} + \widetilde{\varphi}(x)
\end{split}\]
Taking the limit when $\varepsilon$ goes to $0$ at the last inequality, we conclude that
$$P_{\mu}(x,f,\varphi) = h_{\mu}(x,f)+\widetilde{\varphi}(x)$$
exists for every $x \in \mathcal{BK}\cap \mathcal{B}_\varphi$. Items (b), (c) and (d) are immediate after (a).
\end{proof}

\section{Proof of Corollary~\ref{cor.ES}}

Firstly recall that, given a compact metric space $X$ and a continuous map $f:X \to X$, the pressure map $P_{\text{top}}(f,.) \colon \,C^0(X,\mathbb{R}) \, \to \, \mathbb{R}\cup \{+ \infty\}$, defined on the space $C^0(X,\mathbb{R})$ of continuous potentials, is either finite valued or constantly $+ \infty$ (cf. \cite[\S 9.2]{W1981}).

Consider a Gibbs measure $\mu$ for the dynamics $f$ and a continuous potential $\varphi$, and gather the corresponding $\varepsilon_0$, $\Lambda$ and $\big(\delta_n(\varepsilon, x)\big)_{n \, \in \, \mathbb{N}}$ satisfying equations \eqref{eq:Gibbs-delta} and \eqref{eq:Gibbs} for every $x\in \Lambda$ and every $n \in \mathbb{N}$.
As we are assuming that $h_{\text{top}}(f) < +\infty$, we know that $P_{\text{top}}(f, \varphi)$ is finite. Rewriting \eqref{eq:Gibbs}, we obtain, for every $x \in \mathcal{BK} \cap \mathcal{B}_\varphi \cap \Lambda$, 
\begin{equation*}
P_{\text{top}}(f,\varphi)-\frac{\log \delta_n(\varepsilon, x)}{n} \leqslant  \frac{ -\log \mu\big(B^f_n(x,\varepsilon)\big)+S^f_n\varphi(x)}{n}
 \leqslant P_{\text{top}}(\varphi,f)-\frac{\log\delta_n(\varepsilon, x)^{-1}}{n}.
\end{equation*}
Taking $\limsup_{n\to+\infty}$ (or $\liminf_{n\to+\infty}$) and afterwards the limit as $\varepsilon$ goes to $0$, we get 
$$P_{\mu}(x,f,\varphi)=P_{\text{top}}(f,\varphi)\quad \quad \mbox{at} \,\, \mu-a.e. \,\, x \in X.$$
Thus, applying Lemma~\ref{le.BK}, we conclude that
$$h_{\mu}(f) + \int \varphi \,d\mu = \int P_{\mu}(x,f,\varphi)\,d\mu = \int P_{\text{top}}(f,\varphi)\,d\mu = P_{\text{top}}(f,\varphi).$$
Therefore $\mu$ is an equilibrium state for $f$ with respect to $\varphi$.

\section{Open questions}

Is a Gibbs measure for a dynamical system $f$ with respect to a potential $\varphi$ always $f$-invariant? We may also wonder whether the existence of a Gibbs measure for $f$ and $\varphi$ prompts the existence of an equilibrium state for $\varphi$. Or we may ask under what additional conditions, other than $f$-invariance, is a Gibbs measure for $f$ and $\varphi$ an equilibrium state for these dynamics and potential, or 
else an equilibrium state for another natural potential somehow related to $\varphi$. For instance, under a stronger definition of the Gibbs property and assuming that $f$ is a homeomorphism satisfying expansiveness and specification, the answer is positive (cf. \cite{HR1992}). As far as we know, these are still open questions.


\section*{Acknowledgements}
MC and SP were partially supported by CMUP (UID/MAT/00144/2019), which is funded by FCT with national (MCTES) and European structural funds through the programs FEDER, under the partnership
agreement PT2020. SP also acknowledges financial support from a postdoctoral grant of the project PTDC/MAT-CAL/3884/2014


\begin{thebibliography}{999}\label{ref}

\bibitem{BK1983}
M.~Brin, A.~Katok.
\newblock \emph{On local entropy.}
\newblock Geometric Dynamics (Rio de Janeiro), Springer-Verlag, 1981.


\bibitem{HR1992}
N.~Haydn, D.~Ruelle.
\newblock \emph{Equivalence of Gibbs and equilibrium states for homeomorphisms satisfying expansiveness and specification.}
\newblock Commun. Math. Phys. 148 (1992) 155--167.

\bibitem{W1981}
P.~Walters.
\newblock \emph{An Introduction to Ergodic Theory.}
\newblock Graduate Texts in Mathematics 79, Springer-Verlag, 1981.

\bibitem{ZZC2014}
X.~Zhou, L.~Zhou, E.~Chen,
 \newblock \emph{Brin-Katok formula for the measure theoretic r-entropy}.
\newblock  
 C.R. Acad. Sci. Paris Ser. I 352:6 (2014) 473--477.





\end{thebibliography}
\end{document}